\def\draw #1 by #2 (#3){
	\vbox to #2{
		\hrule width #1 height 0pt depth 0pt
		\vfill
		\special{picture #3} 
	}
}
\def\scaleddraw #1 by #2 (#3 scaled #4){{
		\dimen0=#1 \dimen1=#2
		\divide\dimen0 by 1000 \multiply\dimen0 by #4
		\divide\dimen1 by 1000 \multiply\dimen1 by #4
		\draw \dimen0 by \dimen1 (#3 scaled #4)}
}
\newtheorem{theorem}{Theorem}[section]
\newtheorem{example}[theorem]{Example}
\newtheorem{problem}[theorem]{Problem}
\newtheorem{corollary}[theorem]{Corollary}
\newtheorem{remark}[theorem]{Remark}
\newtheorem{nt}{Note}
\newcommand{\singlespacing}{\let\CS=\@currsize\renewcommand{\baselinestretch}{1}\tiny\CS}
\newcommand{\oneandahalfspacing}{\let\CS=\@currsize\renewcommand{\baselinestretch}{1.25}\tiny\CS}
\newcommand{\doublespacing}{\let\CS=\@currsize\renewcommand{\baselinestretch}{1.35}\tiny\CS}
\newtheorem{rule-def}[theorem]{Rule}
\begin{document}
	\baselineskip 16pt
	\newcommand{\la}{\lambda}
	\newcommand{\si}{\sigma}
	\newcommand{\ol}{1-\lambda}
	\newcommand{\be}{\begin{equation}}
		\newcommand{\ee}{\end{equation}}
	\newcommand{\bea}{\begin{eqnarray}}
		\newcommand{\eea}{\end{eqnarray}}
	\baselineskip=0.30in
		\baselineskip=0.30in
        \begin{center}
		{\Large  \textbf{ On Coalition Graphs and Coalition Count of Graphs} }\\
		\vspace*{0.3cm} 
    \vspace*{0.3cm} 
	\end{center}
	\begin{center}
		Swathi Shetty$^{1}$, Sayinath Udupa N. V.$^{*,2}$,  B. R. Rakshith$^{3}$\\
		Department of Mathematics, Manipal Institute of Technology\\ Manipal Academy of Higher Education\\ Manipal, India.\\
    swathishetty6498@gmail.com; swathi.dscmpl2022@learner.manipal.edu$^{1}$\\
		sayinath.udupa@manipal.edu	$^{*,2}$\\
       ranmsc08@yahoo.co.in; rakshith.br@manipal.edu$^{3}$.\\
        26 Nov 2025
	\end{center}
   \baselineskip=0.20in
     \footnotetext{*Corresponding author}  
\begin{abstract} Let $G$ be graph with vertex set $V(G)$ and order $n$. A set $S \subseteq V(G)$ is a dominating set of a graph $G$ if every vertex in $V(G) \backslash S$ is adjacent to at least one vertex in $S$. A coalition in a graph $G$ consists of two disjoint sets of vertices $V_1$ and $V_2$,  neither of which is a dominating set but whose union $V_1 \cup V_2$ is a dominating set.  A coalition partition, abbreviated $c$-partition, in a graph $G$ is a vertex partition $\pi=\left\{V_1 , V_2,\dots, V_k\right\}$ such that every set $V_i$ of $\pi$ is either a singleton dominating set, or is not a dominating set but forms a coalition with another set $V_j$ in $\pi$. The sets $V_i$ and $V_j$ are coalition partners in $G$. The coalition number $C(G)$ equals the maximum order $k$ of a $c$-partition of $G$. For any graph $G$ with a $c$-partition $\pi=\left\{V_1,V_2,\dots,V_k\right\}$, the coalition graph $CG(G,\pi)$ of $G$ is a graph with vertex set $V_1,V_2,\dots, V_k$, corresponding one-to-one with the set $\pi$, and two vertices $V_i$ and $V_j$ are adjacent in $CG(G,\pi)$ if and only if the sets $V_i$ and $V_j$ are coalition partners in $\pi$. In~\cite{haynes2023coalition}, authors proved that for every graph $G$ there exist a graph $H$ and $c$-partition $\pi$ such that $CG(H,\pi)\cong G$, and raised the question: Does there exist a graph $H^*$ of smaller order $n^*$ and size $m^*$ with a $c$-partition $\pi^*$ such that $CG(H^*,\pi^*)\cong G$?. In this paper, we constructed a graph $H^*$ of small order and size and a $c$- partition $\pi^*$ such that $CG(H^*,\pi^*)\cong G$.  Recently, Haynes et al.~\cite{haynes2020introduction} defined the coalition count $c(G)$ of a graph $G$ as the maximum number of different coalition in any $c$-partition of $G$.  We characterize all graphs $G$ with $c(G)=1$. Further, imposing some suitable conditions on coalition number, we study the properties of coalition count of graph.
\end{abstract}
\noindent 
      \textbf{Mathematics Subject Classifications:} 05A18, 05C69.\\[1mm]
       \textbf{Keywords:} Dominating set, Coalition partition, Coalition number.
\section{Introduction }
Let $G = (V(G), E(G))$ be a graph with vertex set $V(G)$, edge set $E(G)$, order $n(G) = |V(G)|$, and size $m(G) = |E(G)|$. A set $S \subseteq V(G)$ is a \textit{dominating set} of a graph $G$ if every vertex in $V(G) \backslash S$ is adjacent to at least one vertex in $S$. A detailed account of domination in graphs is presented in the recent books on domination theory~\cite{haynes2013fundamentals,haynes2020topics}. The open neighborhood $N(v)$ of a vertex $v$ in $G$ is the set of vertices adjacent to $v$, while the closed neighborhood of $v$ is the set $N[v] =
\left\{v\right\} \cup N(v)$. A vertex of degree one is a \textit{pendant vertex}.\\ 
    The concept of coalitions in graphs was introduced by Haynes et.al.~\cite{haynes2020introduction} in 2020 as follows. A \textit{coalition} in a graph $G$ consists of two disjoint sets of vertices $V_1$ and $V_2$,  neither of which is a dominating set but whose union $V_1 \cup V_2$ is a dominating set.
     A \textit{coalition partition}, henceforth called a $c$-partition, in a graph $G$ is a vertex partition $\pi=\left\{V_1 , V_2,\dots, V_k\right\}$ such that every set $V_i$ of $\pi$ is either a singleton dominating set, or is not a dominating set but forms a coalition with another set $V_j$ in $\pi$. The \textit{coalition number} $C(G)$ equals the maximum order $k$ of a $c$-partition of $G$, and a $c$-partition of $G$ having order $C(G)$ is called a \textit{$C(G)$-partition}. In~\cite{haynes2021upper}, Haynes et al.\ established upper bounds on the coalition number in terms of the minimum and maximum degree. In~\cite{haynes2023coalition}, they proved that every graph is the coalition graph of some graph, while in~\cite{haynes2023coalition1}, they investigated coalition graphs of trees, paths, and cycles. Furthermore, in~\cite{haynes2020introduction}, the authors posed the open problem of characterizing all graphs $G$ of order $n$ with coalition number $C(G)=n$. This problem was partially addressed by Bakhshesh et al.~\cite{bakhshesh2023coalition}, where the authors characterized all graphs of order $n$ with $\delta(G) \leq 1$ that satisfy $C(G)=n$, and identified all trees whose coalition number equals $n-1$. In addition, several variants of coalition have been introduced and studied in~\cite{henning2025double,samadzadeh2025paired,alikhani2024total,alikhani2025independent}.\par
     In the paper~\cite{haynes2020introduction} while introducing coalitions, the authors suggest several related areas for future study, one of which is coalition count of $G$ defined as follows. The coalition count $c(G)$ of a graph $G$ is equal to  the maximum number of different coalition in any $c$-partition of $G$.
    \begin{example}
      Consider the cycle $C_4=(v_1,v_2,v_3,v_4)$.\\ The partition $\pi_1=\left\{\left\{v_1\right\},\left\{v_2\right\},\left\{v_3\right\},\left\{v_4\right\}\right\}$ is a $c$-partition of $C_4$. No individual set in $\pi_1$ is a dominating set. However the following different coalition exist:
      \begin{itemize}
          \item  The set $\left\{v_1\right\}$ forms coalition with $\left\{v_2\right\}$, $\left\{v_3\right\}$ and $\left\{v_4\right\}$.
          \item  The set $\left\{v_2\right\}$ forms coalition with $\left\{v_3\right\}$ and $\left\{v_4\right\}$.
          \item The set $\left\{v_3\right\}$ form coalition with $\left\{v_4\right\}$.
      \end{itemize}
  Thus, every set in \( \pi_1 \) forms a coalition with at least one other set. Hence, \( C(C_4) = 4 \), and \( \pi_1 \) is a \( C(C_4) \)-coalition partition. But, the total number of different possible coalitions is 6, thus $c(C_4)=6$.  
    \end{example}
    \section{Coalition graphs}
    In~\cite{haynes2023coalition} Haynes et al. defined coalition graph of $G$ as follows.
   Let $G$ be a graph with a $c$-partition $\pi=\left\{V_1,V_2,\dots,V_k\right\}$. The coalition graph $CG(G,\pi)$ of $G$ is a graph with vertex set $V_1,V_2,\dots, V_k$,  and two vertices $V_i$ and $V_j$ are adjacent in $CG(G,\pi)$ if and only if the sets $V_i$ and $V_j$ are coalition partners in $\pi$. 
	In~\cite{haynes2023coalition} Haynes et.al proved that, for every graph $G$, there is a graph $H$ and some $c$-partition $\pi$ of $H$, such that $CG(H,\pi)\cong G$, that is, for every graph $G=(V,E)$ having $n$ non-isolated vertices and $t$ isolated vertices with $|E|=m$, they constructed a graph $H$ of order $n+m+\binom{n}{2}+t$ and size $m=\binom{n}{2}+2m(n-1)+\overline{m}(n-2)+t(n(H)-1)$, where $n(H)$ is order of $H$, and the $c$-partition $\pi$ such that $CG(H,\pi)\cong G$.\\
	 Further, they raised the following question: Does there exist a graph $H^*$ of smaller order $n^*$ and size $m^*$ with a $c$-partition $\pi^*$ such that $CG(H^*,\pi^*)\cong G$?.
	\begin{theorem}
		For every graph $G$, there exist a graph $H^*$ and $c$-partition $\pi^*$ such that $CG(H^*,\pi^*)\cong G$.
	\end{theorem}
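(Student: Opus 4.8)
The plan is to construct $H^*$ from $G$ so that the vertices of $G$ correspond one-to-one with the parts of a $c$-partition $\pi^*$ of $H^*$, with adjacency in $G$ exactly mirroring the coalition-partner relation. Write $V(G)=\{u_1,\dots,u_N\}$. For each vertex $u_i$ I would introduce a representative block $B_i$ of vertices in $H^*$ that will become the part $V_i$ of $\pi^*$; the goal is to engineer the domination structure of $H^*$ so that $B_i \cup B_j$ dominates $H^*$ precisely when $u_iu_j \in E(G)$, while no single $B_i$ dominates $H^*$. The main economy over the original construction of Haynes et al. should come from letting the blocks $B_i$ be as small as possible (ideally singletons or near-singletons) and from building the ``forced non-domination'' and ``forced coalition'' gadgets on a shared pool of auxiliary vertices rather than allocating fresh vertices of order $\binom{N}{2}$ for each pair.

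First I would fix, for each non-edge $u_iu_j$ of $G$, a private witness vertex $w_{ij}$ that is deliberately left undominated by $B_i \cup B_j$ but is dominated by some other block; this enforces that non-adjacent parts fail to form a coalition. Symmetrically, for each edge $u_iu_j$ I would arrange the closed neighborhoods so that every vertex of $H^*$ lies in $N[B_i]\cup N[B_j]$. Second, I would guarantee that no individual part $B_i$ is itself a dominating set by attaching, to each $B_i$, at least one vertex outside $N[B_i]$ that can only be reached through some partner block; this is what makes every part a genuine non-dominating coalition participant rather than a singleton dominating set. Third, I would verify that $\pi^*=\{B_1,\dots,B_N\}$ together with any leftover auxiliary vertices is a genuine partition of $V(H^*)$ that forms a legitimate $c$-partition, assigning the auxiliary vertices to the blocks carefully so they do not accidentally create or destroy coalitions.

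The key verification step is the two-directional equivalence: $B_i$ and $B_j$ are coalition partners in $\pi^*$ if and only if $u_iu_j\in E(G)$. The forward direction (edge $\Rightarrow$ coalition) reduces to checking that the union of the two blocks covers every witness and every auxiliary vertex, which follows from the way the neighborhoods were defined. The reverse direction (coalition $\Rightarrow$ edge) is where the witness vertices $w_{ij}$ do their work: if $u_iu_j$ is a non-edge, then $w_{ij}$ remains undominated by $B_i\cup B_j$, so they cannot be partners. I would then conclude $CG(H^*,\pi^*)\cong G$ via the bijection $V_i \leftrightarrow u_i$.

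The hard part will be the simultaneous accounting of order and size so that $H^*$ is provably smaller than the graph $H$ of Haynes et al., since the economy is the whole point of the theorem (it answers their explicit question). In particular, the auxiliary witness vertices $w_{ij}$ threaten to reintroduce a $\binom{N}{2}$-type term, so the real technical work is to share or reuse witnesses across pairs, or to replace them by a more compact gadget (for instance, encoding non-adjacency through a small number of ``global'' auxiliary vertices whose neighborhoods collectively distinguish the pairs), and then to tally $n^*=n(H^*)$ and $m^*=m(H^*)$ and compare termwise against $n+m+\binom{n}{2}+t$. I expect the isolated vertices of $G$ to require separate handling, exactly as in the original construction, since an isolated $u_i$ corresponds to a part $B_i$ that must form no coalition at all and hence must individually fail to dominate while never completing a dominating union with any partner.
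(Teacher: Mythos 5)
Your skeleton --- one block per vertex of $G$, a private witness vertex for each non-edge that exactly that pair fails to dominate, and the two-directional verification of the coalition-partner relation --- is the same architecture as the paper's construction, so the core idea is sound. But there are two genuine gaps. First, your handling of isolated vertices is wrong as stated: you propose that an isolated vertex $u_i$ of $G$ correspond to a part $B_i$ that ``must individually fail to dominate while never completing a dominating union with any partner.'' Such a part is forbidden by the definition of a $c$-partition: every part must either be a singleton dominating set or be a non-dominating set that forms a coalition with some other part. A non-dominating part with no coalition partner makes $\pi^*$ fail to be a $c$-partition at all, so your construction could never be legal. The only way to produce an isolated vertex in a coalition graph is the opposite of what you wrote: make that part a singleton \emph{dominating} set. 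The paper does exactly this, joining each isolate $w_i$ of $G$ to every other vertex of $H^*$, so that $W_i=\{w_i\}$ is a full-vertex singleton dominating set, which by definition has no coalition partner.

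Second, the proposal defers its own crucial step. You acknowledge that ``the real technical work'' is to prevent the witnesses from reintroducing a $\binom{N}{2}$-type term, and you leave open whether witnesses are shared, reused, or replaced by a compact gadget --- but you never resolve this, so no concrete construction is exhibited and the key verification cannot actually be carried out. The paper's resolution is concrete and different from what you anticipate: it keeps one witness per non-edge (that is $\overline{m}_{G'}=\binom{n}{2}-m$ of them, which is acceptable, since the goal is only to beat the order $n+m+\binom{n}{2}+t$ of Haynes et al.). The economy comes instead from two other choices: the base graph is $K_n$ minus a (near-)perfect matching, so that no base vertex is full and any two adjacent base vertices already dominate all base vertices without per-edge gadgets; and each witness $v_{jk}$ is absorbed into an \emph{existing} part $V_\ell$ with $\ell\notin\{j,k\}$ rather than made into a new part, so witnesses increase the order to $n^*=n+\overline{m}_{G'}+t$ but do not increase the number of parts, and the partition stays in bijection with $V(G)$. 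Without committing to such a base graph and witness placement (and the accompanying bookkeeping, e.g.\ deleting edges inside parts so that no part accidentally becomes dominating), your ``key verification step'' --- that $B_i\cup B_j$ dominates $H^*$ precisely when $u_iu_j\in E(G)$ --- has nothing to be checked against.
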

	\begin{proof}
		Let $G$ be a graph with vertex set  $V(G)=\{v_1,v_2,\dots,v_n\}\cup \{w_1,w_2,\dots,w_t\}$, 
		where each $v_i$ has degree at least one and each $w_i$ is an isolate. Let $G'=G-\{w_1,\dots,w_t\}$. Then $|E(G)|=|E(G')|=m$ and $|E(\overline{G'})|=\overline{m}_{G'}$, where $m+\overline{m}_{G'}=\binom{n}{2}$. To construct $H^*$, we begin with a complete graph $K_n$ on vertices $\{v_1,\dots,v_n\}$, corresponding to the non-isolates of $G$. If $n$ is even, partition these vertices into $\tfrac{n}{2}$ disjoint pairs and delete the edge within each pair; if $n$ is odd, form $\tfrac{n-1}{2}$ such pairs and delete the edge within each pair and retain all edges incident with $v_n$. These $n$ vertices are referred to as the base vertices of $H^*$. The partition $\pi^*$ is initially taken as the singleton partitions $V_i=\{v_i\}$, $1\le i\le n$. We will add to the sets in the partition as we build $H^*$. \\[2mm]
			Now we consider the following cases:\\
			Case 1: Let $G \cong K_n \cup tK_1$. If $n$ is even and $t \neq 0$, we construct $H^*$ by adding each isolate vertex $w_i \in V(G)$ to $H^*$ and connecting $w_i$ to every other vertex of $H^*$. Then, the partition $\pi^*$ is extended by adding singleton sets $W_i = \{w_i\}$ for $n \leq i \leq t$. In this construction, no set $V_i$ in $\pi^*$ is a dominating set, but every pair of sets $V_i$ in $\pi^*$ forms a coalition. Therefore, the coalition graph satisfies $CG(H^*, \pi^*) \cong K_n \cup tK_1$, where $n$ is even.\\
			 If $n$ is odd and $t \neq 0$, we first extend $H^*$ by introducing a new vertex $u_n$ associated with vertex $v_n$, such that $u_n$ is adjacent to all base vertices except $v_n$. The vertex $u_n$ is assigned to a set $V_k$ in $\pi^*$, where $k \neq n$, and the edge $u_n v_k$ is removed from $H^*$. Next, for every isolate $w_i \in V(G)$, we add $w_i$ to $H^*$ and connect it to every other vertex of $H^*$. The partition $\pi^*$ is then extended by singleton sets $W_i = \{w_i\}$ for $n \leq i \leq t$. Further, none of the sets $V_i$ is a dominating set, but every pair of sets $V_i$ in $\pi^*$ forms a coalition. Thus, $CG(H^*, \pi^*) \cong K_n \cup tK_1$, where $n$ is odd.\\[2mm]
			Case 2: If $G\cong G^{'}\cup tK_1$, where $G^{'}\ncong K_n$, then
		without loss of generality, we assume that $v_n$ is not a full vertex in $G$. For each edge $v_jv_k\in E(\overline{G'})$, introduce a new vertex $v_{jk}$ adjacent to every base vertex except $v_j$ and $v_k$, and place $v_{jk}$ into some set of $\pi^*$ other than $V_j$ or $V_k$ and then remove edges between the vertices in the same set $V_i$ in $\pi^*$.
		Here it is important to note that none of the base vertices of $H^*$ is a full vertex. Finally, add each isolate $w_i\in V(G)$ to $H^*$ and insert an edge from $w_i$ to every other vertices of $H^*$. Now, extend $\pi^*$ by a singleton sets $W_i=\{w_i\}$, $1\le i\le n$. \par 
		In the resulting graph, every edge of $H^*$ is incident with either a base vertex or a dominating vertex, and no set $V_j$ of $\pi^*$ is itself a dominating set. For any edge $v_jv_k\in E(G)$. Since $v_j$ and $v_k$ collectively dominate every vertex in $H^*$, $V_j\cup V_k$ is a coalition in $H^*$. Further, if $v_jv_k\notin E(G)$, then $V_j\cup V_k$ is not a dominating set as there is no vertex in $V_j\cup V_k$ that will dominate the vertex $v_{jk}$. Each $w_i$ is dominating vertex in $H^*$, so $w_i$ is an isolate in $G$. Hence the partition 
		$\pi^*=\{V_1,\dots,V_n,W_1,\dots,W_t\}$ 
		is a $c$-partition of $H^*$ satisfying $CG(H^*,\pi^*)\cong G$.\end{proof}
        Properties of graph $H^*$ constructed above 
	\begin{table}[h]
		~~~~~~~~~~~~\begin{tabular}{|c|c|c|}\hline
			Graph $G$&$n(H^*)$&$m(H^*)$\\ \hline
			$G\cong K_n\cup tK_1$, $n$ is even& $n+t$&$\binom{n}{2}-\dfrac{n}{2}+t(n(H^*)-1)$\\ \hline
			$G\cong K_n\cup tK_1$, $n$ is odd& $n+t+1$& $\binom{n}{2}-\dfrac{n-1}{2}+n-2+t(n(H^*)-1)$\\\hline
			$G\cong G'\cup tK_1$, where $G'\not\cong K_n$&$n+\overline{m}_{G'}+t$&$\binom{n}{2}-\lfloor{\dfrac{n}{2}}\rfloor+\overline{m}_{G'}(n-3)+t(n(H^*)-1)$\\\hline
		\end{tabular}
	\end{table}   
   \section{Properties of coalition count $c(G)$ of graph $G$}
   The following observations on coalition count $c(G)$ of graph $G$ motivated us to study the properties of $c(G)$.
\begin{enumerate}
   \item The coalition number $C(G)$ and coalition count $c(G)$ of graph $G$ are not comparable.
    For example, Consider complete graph $K_n$, $C(K_n)=n>0=c(K_n)$, but for $C_4$, $c(C_4)=6> C(C_4)=4$ and for $P_4$, $c(P_4)=C(P_4)=4$.
    \item The $c$-partition used to count the coalition number and coalition count need not be unique. For example, consider $P_6=\left\{v_1,v_2,v_3,v_4,v_5,v_6\right\}$. The partition\\ $\pi_2 = \left\{ \{v_2\}, \{v_4\}, \{v_1, v_6\}, \{v_3\}, \{v_5\} \right\}$  is a \( c \)-partition of \( P_6 \). Thus, $C(P_6)=5$ but the total number of different possible coalitions is 3. Now, consider $\pi_3 = \left\{\{v_2, v_4\}, \{v_6\}, \{v_1, v_3\}, \{v_5\} \right\}$ is a  \( c \)-partition of \( P_6 \), where $c(G)=5$.
    \item The maximum number of edges among all the graphs $CG(G,\pi)$ is the coalition count of $G$.
   \end{enumerate}
   The \textit{domatic partition} is a partition of the vertex set into dominating sets. The \textit{domatic number} $d(G)$ is equal to the maximum order $k$ of the vertex partition, called domatic partition, $P=\left\{V_1,V_2,\dots,V_k\right\}$, such that every set $V_i$ is a dominating set in $G$.\\
   In a graph $G$ of order $n$, a vertex of degree $n - 1$ is called a \textit{full vertex}. A subset $V_i$ is called a \textit{singleton set} if $|V_i| = 1$. Note that any full vertex forms a singleton dominating set.
   \begin{theorem}\label{domt}
  For any graph $G$ with no isolated vertices and $f$ full vertices, $c(G)\ge d(G)-f$.
  \end{theorem}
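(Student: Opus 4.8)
The plan is to start from a maximum domatic partition of $G$ and convert it into a $c$-partition while sacrificing at most one coalition per full vertex. Let $d=d(G)$ and fix a domatic partition $\{D_1,\dots,D_d\}$, so each $D_i$ is a dominating set. The elementary fact that links $f$ to the structure is this: a singleton $\{v\}$ is a dominating set precisely when $v$ is a full vertex, so every singleton block of a domatic partition is a full vertex, and conversely full vertices are the only vertices that can sit alone in a dominating block.

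First I would normalize the partition so that every full vertex occupies its own singleton block. If a full vertex $u$ lies in a block $D_i$ of size at least two, replace $D_i$ by $\{u\}$ and merge the remainder $D_i\setminus\{u\}$ into another block $D_j$; both resulting blocks are still dominating and the number of blocks is unchanged, so we still have a maximum domatic partition. Iterating this (and disposing of the trivial case $d\le 1$ separately, where the claimed inequality is immediate) produces a maximum domatic partition in which exactly $f$ blocks are full-vertex singletons and the remaining $d-f$ blocks have size at least two and contain no full vertex.

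Next, for each of the $d-f$ large blocks $D_i$ I would manufacture one coalition. Since $D_i$ is dominating and contains no full vertex, a minimal dominating set $M_i\subseteq D_i$ has $|M_i|\ge 2$, and for any $v_i\in M_i$ the two sets $\{v_i\}$ and $M_i\setminus\{v_i\}$ are both non-dominating: $\{v_i\}$ because $v_i$ is not full, and $M_i\setminus\{v_i\}$ by minimality of $M_i$. Their union $M_i$ is dominating, so $\{v_i\}$ and $M_i\setminus\{v_i\}$ are coalition partners. When $M_i=D_i$ for every large block this at once yields a $c$-partition $\pi$: the $f$ full-vertex singletons are singleton dominating sets and each large block contributes a coalition pair, so $\pi$ exhibits at least $d-f$ distinct coalitions and $c(G)\ge d(G)-f$ follows.

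The hard part is handling the leftover vertices $D_i\setminus M_i$ that appear when $M_i$ is a proper subset of $D_i$: they must be redistributed so that $\pi$ remains a genuine vertex partition while every coalition piece stays non-dominating. The sharp obstacle is that a dominating block can be \emph{every-vertex-redundant} — removing any single vertex leaves it dominating — in which case it cannot be split into just two non-dominating parts; the block $\{a,b,c\}$ in $K_{2,2,2}$, where every pair already dominates, is the prototype, and there the only non-dominating sets are singletons. To close this gap I would argue that the maximum domatic partition can be chosen with every block a minimal dominating set, via an exchange argument that pushes redundant vertices between blocks, so that no leftover vertices arise at all; failing that, one splits such a block into several non-dominating pieces and verifies that each piece participates in a coalition. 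Establishing that this normalization to minimal blocks is always achievable without lowering the domatic count is the main technical point to settle.
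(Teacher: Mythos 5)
Your construction follows the same skeleton as the paper's proof: put each full vertex in its own singleton block, shrink each remaining block $D_i$ to a minimal dominating set $M_i$ (necessarily of size at least two, since $D_i$ contains no full vertex), and split $M_i$ into $\{v_i\}$ and $M_i\setminus\{v_i\}$ to produce one coalition per block. The problem is that the step you explicitly defer is the entire difficulty, and the primary route you propose for it is not just unproven but false. No exchange argument can normalize a maximum domatic partition so that every block is a minimal dominating set, because such a partition need not exist: in $C_5$ every $3$-subset of vertices contains a pair at distance two, and such a pair already dominates, so every minimal dominating set of $C_5$ has size exactly $2$; since $d(C_5)=2$ and $C_5$ has five vertices, no domatic partition of order $2$ (indeed no partition of $V(C_5)$ whatsoever) can consist of minimal dominating sets. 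Your fallback sentence --- ``one splits such a block into several non-dominating pieces and verifies that each piece participates in a coalition'' --- is a restatement of the unproven claim rather than an argument: a leftover piece $D_i\setminus M_i$ can a priori fail to form a coalition with every block of the partition, and if it does fail, what you have built is not a $c$-partition at all, since every non-dominating block must have a coalition partner.

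The missing idea, which is how the paper closes exactly this gap, is an absorption argument for a dead leftover set. First, a leftover $W$ is never dominating: otherwise the minimal dominating sets together with $W$ would give $d(G)+1$ disjoint dominating sets, contradicting the maximality of $d(G)$. Now, if $W$ forms a coalition with some block, keep it as its own block. If $W$ forms no coalition with any block, merge it into one half of the split pair of its own block, i.e., replace $W$ and $M_i\setminus\{v_i\}$ by the single block $(M_i\setminus\{v_i\})\cup W$. This merged block is still non-dominating precisely \emph{because} $W$ forms no coalition: both $W$ and $M_i\setminus\{v_i\}$ are non-dominating, so if their union were dominating they would be coalition partners, contrary to assumption. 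And the merged block still forms a coalition with $\{v_i\}$, since their union contains the dominating set $M_i$. Thus every block of the final partition is a full-vertex singleton or has a coalition partner, and the $d(G)-f$ split pairs give $d(G)-f$ distinct coalitions. (The paper organizes this slightly differently --- it funnels all leftovers into a single final block $V_k'$ and runs this case analysis once --- but the absorption observation is the substance either way, and it is the step your proposal lacks.)
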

  \begin{proof}Assume \( P = \{V_1, V_2, \dots, V_k\} \) is a domatic partition of \( G \) with \( k = d(G) \). Without loss of generality, suppose that \( V_1, V_2, \dots, V_{k-1} \) are minimal dominating sets. If any \( V_i \) (for \( 1 \leq i \leq k - 1 \)) is not minimal, we replace it with a minimal dominating set \( V_i' \subseteq V_i \), and add the vertices in \( V_i \setminus V_i' \) to \( V_k \). Since $G$ has $f$ full vertices, there will be $f$ singleton dominating sets. Now, consider a domatic-partition $P_1=\left\{\underbrace{V_1',\dots,V_f'}_{\text{ $f$ full vertices}},\underbrace{V_{f+1}',\dots,V_{k-1}'}_{\text{minimal dominating sets}},V_k'\right\}$. Then each of  $k-f-1$ minimal dominating sets can be partitioned into two non-empty, non-dominating subsets $V_{i,1}$ and $V_{i,2}$, where $i=f+1,\dots,k-1$, whose union is dominating. Thus, these minimal dominating sets contribute at least $k-f-1$ coalitions. \\[2mm]
 Consider the vertex partition $P_2=\left\{V_1',\dots,V_f',V_{f+1,1},V_{f+2,2},\dots,V_{k-1,1}, V_{k-2,2}, V_k'\right\}$. Further, if \( V_k' \) is also a  minimal dominating set, then \( V_k' \) can be partitioned into two non-empty, non-dominating subsets whose union is dominating. Therefore, by replacing $\left\{V_k'\right\}$ in $P_2$ using these two sets, we get a $c$-partition of $G$. Thus, $c(G)\ge k-f-1+1=k-f$.\\[2mm] 
Suppose \( V_k' \) is not a minimal dominating set. Let \( V_k'' \subseteq V_k' \) be a minimal dominating set, and let \( V_k'' = V_{k,1} \cup V_{k,2} \), where \( V_{k,1} \) and \( V_{k,2} \) are non-empty, non-dominating sets that together form a coalition. Define \( W = V_k' \setminus V_k'' \). If \( W \) is a dominating set, then there exist at least \( k + 1 \) disjoint dominating sets in \( G \), contradicting \( d(G) = k \). Thus, $W$ is not a dominating set in $G$. Now, by replacing $V_k'$ in $P_2$ by $V_{k,1},V_{k,2}$ and $W$, we obtain a $c$-partition of $G$. If \( W \) can form a coalition with another non-dominating set then $c(G)\ge k-f+1$. If \( W \)  does not form a coalition with any set, then consider the union \( V_{k,2} \cup W \) as a set in $c$-partition. This set remains non-dominating but forms a coalition with \( V_{k,1} \). Thus, $c(G)\ge k-f$. Therefore, $c(G)\ge k-f=d(G)-f$.
\end{proof}
The bound of Theorem~\ref{domt} is sharp for stars $K_{1,n-1}$ , for $n\ge 3$, as $c(K_{1,n-1})=1$, $d(G)=2$ and $f=1$. Also, the bound is sharp for complete graph, where $c(K_n)=0$ and $d(K_n)=f=n$.\\
   An \textit{independent set} is a set of vertices in $G$ such that no two vertices in the set are adjacent. The \textit{independence number} $\alpha(G)$ of a graph $G$ is the cardinality of the largest independent set in $G$. \\ 
  In next theorem, we characterize the graphs whose coalition count is exactly one.
  \begin{theorem}\label{cgo}
  Let $G$ be a graph with $f$ full vertices. Then $c(G)=1$ if and only if $\alpha(G)=n-f$.
  \end{theorem}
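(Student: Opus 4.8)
The plan is to first translate the hypothesis and strip away the full vertices. Write $F$ for the set of $f$ full vertices and $R=V(G)\setminus F$ for the non-full vertices. Any independent set meeting $F$ is a single vertex, so $\alpha(G)=\alpha(G[R])\le |R|=n-f$, with equality exactly when $R$ is independent; thus $\alpha(G)=n-f$ is equivalent to ``$R$ is an independent set'' (and forces $n-f\ge 2$, since a lone non-full vertex adjacent to all of $F$ would itself be full). I would record two facts valid for every $c$-partition: (i) each full vertex is a singleton block that is isolated in the coalition graph, since a block containing a full vertex is dominating and so can be neither a non-singleton block nor a coalition partner; and (ii) for $S\subseteq R$, $S$ dominates $G$ iff $S$ dominates $G[R]$ (the full vertices are dominated by any nonempty set, and a vertex of $R$ has its non-full neighbours only inside $R$). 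Because $G[R]$ has no full vertex, (i)--(ii) give a bijection between $c$-partitions of $G$ and of $H:=G[R]$ preserving the set of coalitions, so $c(G)=c(H)$. This reduces the theorem to: for $H$ with no full vertex, $c(H)=1$ iff $H$ is edgeless (with $c(H)=0$ only in the degenerate empty case, matching $G=K_n$).

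\textbf{Forward direction.}
Assume $R$ is independent, so $H$ is edgeless on $n-f\ge 2$ vertices. Then $S\subseteq R$ dominates $H$ iff $S=R$, so in any $c$-partition every block is a proper, hence non-dominating, subset of $R$, and two blocks form a coalition iff their union is all of $R$. A block's only possible partner is therefore its complement in $R$, so $R$ must be split into exactly two complementary blocks, giving exactly one coalition; since such a split exists, $c(G)=c(H)=1$.

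\textbf{Converse (contrapositive).}
Suppose $\alpha(G)\ne n-f$. If $f=n$ then $G=K_n$ and $c(G)=0\ne1$. Otherwise $H=G[R]$ has an edge, and I claim $c(H)\ge2$. Choose a minimal dominating set $D\subsetneq V(H)$ containing a non-isolated vertex $d_1$: this is possible because $V(H)$ is not minimal (it has an edge) and any minimal dominating set, having to dominate the endpoints of an edge, contains a non-isolated vertex; note $|D|\ge2$ as $H$ has no full vertex. Put $W=V(H)\setminus D\ne\emptyset$. If $W$ is dominating, then $D$ and $W$ are disjoint dominating sets, so $d(H)\ge2$ and $H$ has no isolated vertex, whence Theorem~\ref{domt} with $f=0$ gives $c(H)\ge d(H)\ge2$. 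If $W$ is non-dominating, consider $\{\{d_1\},\,D\setminus\{d_1\},\,W\}$: all three blocks are non-dominating (by the choice of $d_1$, minimality of $D$, and non-domination of $W$), while $\{d_1\}\cup(D\setminus\{d_1\})=D$ and $(D\setminus\{d_1\})\cup W=V(H)\setminus\{d_1\}$ are both dominating (the latter since $d_1$ is non-isolated). Hence it is a $c$-partition with the two coalitions $(\{d_1\},D\setminus\{d_1\})$ and $(D\setminus\{d_1\},W)$, so $c(H)\ge2$. In either case $c(G)=c(H)\ge2\ne1$.

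\textbf{Main obstacle.}
The forward direction and the reduction are essentially bookkeeping; the real work is manufacturing a second coalition as soon as $H$ has an edge. The structural reason it is always possible is that any $c$-partition with at least three non-full blocks automatically has at least two coalitions, because every block needs a partner and a graph on $\ge3$ vertices with minimum degree $\ge1$ has $\ge2$ edges. The difficulty is therefore to exhibit such a partition, and I expect the crux to be the dichotomy on the domatic number: the ``two disjoint dominating sets'' case is delegated to Theorem~\ref{domt}, while the remaining case requires the explicit three-block construction above, whose validity hinges on being able to select the minimal dominating set $D$ so that both $d_1$ is non-isolated and the complement $W$ fails to dominate.
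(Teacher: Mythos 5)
Your proof is correct, but for the hard direction it takes a genuinely different route from the paper's. The paper never isolates your reduction $c(G)=c(G[R])$ (where $R$ is the set of non-full vertices); instead, to show $c(G)=1\Rightarrow\alpha(G)=n-f$, it works directly with a $c$-partition attaining the single coalition $(V_i,V_j)$, observes that all non-full vertices lie in $V_i\cup V_j$, picks $x\in V_j$ with $N[x]\cap V_i=\emptyset$, and argues by contradiction: if $x$ had a non-full neighbour, moving $V_j\setminus N[x]$ into $V_i$ and splitting what remains of $V_j$ into $\{x\}$ and $N(x)\cap V_j$ yields a partition in which the enlarged $V_i$ forms coalitions with both pieces (both unions dominate, since the vertices removed from $V_i\cup V_j$ are neighbours of $x$ in one case and just $x$ itself in the other), giving $c(G)\ge 2$. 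Your contrapositive argument replaces this surgery on an extremal partition with an ab initio construction: once $H=G[R]$ has an edge, you take a minimal dominating set $D$ of $H$ containing a non-isolated vertex $d_1$ and split on whether $W=V(H)\setminus D$ dominates, invoking Theorem~\ref{domt} in one case and the explicit three-block partition $\{\{d_1\},\,D\setminus\{d_1\},\,W\}$ in the other. What your approach buys: it is uniform over all configurations and avoids the weakest point of the paper's argument, namely the claim that ``in a similar way'' one handles \emph{any} vertex $y\in V_i$ with a non-full neighbour --- the paper's construction is only spelled out for vertices not dominated by the partner set, and the extension to arbitrary $y$ is asserted rather than proved, whereas your construction needs no such case. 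What it costs: dependence on Theorem~\ref{domt} (the paper's argument for this direction is self-contained), plus the bookkeeping of the reduction lemma --- though that lemma (full vertices are inert dominating singletons, and domination of subsets of $R$ passes to $G[R]$) is a tidy fact the paper uses implicitly without stating. The easy direction (an independent $R$ forces any $c$-partition to split $R$ into exactly two complementary non-dominating blocks, hence exactly one coalition) is essentially identical in both proofs. One cosmetic caveat: your blanket equivalence ``$\alpha(G)=n-f$ iff $R$ is independent'' fails for $R=\emptyset$ (complete graphs, where $R$ is vacuously independent yet $\alpha=1\neq 0$), but since you treat $f=n$ separately in the converse and derive $|R|\ge 2$ in the forward direction, this creates no actual gap.
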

  \begin{proof}
  	Consider a graph $G$ with $f$ full vertices $U=\left\{v_1,v_2,\dots,v_f\right\}$ and $c(G)=1$. Then there exist exactly two sets $V_i$ and $V_j$ which are not dominating sets, but $V_i\cup V_j$ is a dominating set. Since $V_i$ is not a dominating set, there exists a vertex $x\in V_j$, such that $(N[x]\backslash U) \cap V_i=\phi$.\\
			Claim: $N(x)\backslash U=\phi$.\\
			Suppose $N(x)\backslash U\neq \phi$. Then by adding the vertices in $V_j\backslash (N[x])$ to $V_i$, it follows that the set  $V_i$ is not a dominating set and $V_j=N[x]$. Now consider the sets $V_{j_1}=\left\{x\right\}$ and $V_{j_2}=N(x)$. We note that the set $V_i$ form coalition with the sets $\left\{x\right\}$ and $N(x)$. Thus, $c(G)\ge 2$, a contradiction. In similar way one can prove that if there exist a vertex in $y\in V_i$ such that $N(y)\backslash U\neq \phi$, then $c(G)\ge 2$. Thus, $N(x)\backslash U=\phi$. Therefore, $\alpha(G)=n-f$.\\[2mm]
			Conversely, suppose $G$ has $f$ full vertices and $\alpha(G)=n-f$. Then, there are $n-f$ vertices which are not adjacent to each other, and all these $n-f$ vertices is of degree $f$. Thus, in order to form a coalition partition, these $n-f$ vertices  must be divided into exactly two sets $V_i$ and $V_j$ which are not dominating sets but $V_i\cup V_j$ is a dominating set. If we partition these $n-f$ vertices into at least three sets $V_i$, $V_j$ and $V_k$. Then clearly, $V_i$ do not form coalition with $V_j$ as there is no vertices either in $V_i$ or  in $V_j$ which can dominate the vertices of $V_k$. Similarly, $V_i$ cannot  form coalition with $V_k$.  Thus, $c(G)=1$.
  \end{proof}
  The union of two graphs $G_1$ and $G_2$ is the graph with vertex set $V(G_1) \cup V(G_2)$ and edge set $E(G_1)\cup E(G_2)$. It is denoted by $G_1 \cup G_2$. The join of two graphs \( G_1 \) and \( G_2 \) is the graph obtained by taking the union of \( G_1 \) and \( G_2 \), and adding edges between every vertex of \( G_1 \) and every vertex of \( G_2 \). It is denoted by \( G_1 + G_2 \).\\
		Using Theorem~\ref{cgo}, we deduce the following result:
		\begin{corollary}
			Let \( G \) be a graph of order \( n \) with $f$ full vertices. Then $c(G)=1$ if and only if $G\cong (K_f+pK_1)\cup qK_1$, where $f+p+q=n$.
		\end{corollary}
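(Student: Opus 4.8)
The plan is to obtain the corollary as an immediate consequence of Theorem~\ref{cgo}, which already reduces the condition $c(G)=1$ to the purely structural requirement $\alpha(G)=n-f$. Thus, for a graph $G$ on $n$ vertices with exactly $f$ full vertices, it suffices to establish the equivalence
\[
\alpha(G)=n-f \iff G\cong (K_f+pK_1)\cup qK_1,\quad f+p+q=n,
\]
and then invoke Theorem~\ref{cgo} to pass between $\alpha(G)=n-f$ and $c(G)=1$.

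For the forward direction I would first record two elementary facts: any two full vertices are adjacent, and every full vertex is adjacent to every other vertex. Hence an independent set that contains a full vertex must be a singleton, so every independent set of size at least two consists entirely of non-full vertices. Since $G$ has exactly $n-f$ non-full vertices, this gives $\alpha(G)\le\max\{1,n-f\}$, and the hypothesis $\alpha(G)=n-f$ then forces the $n-f$ non-full vertices to be pairwise non-adjacent. I would then split into two regimes. If $f\ge 1$, no vertex can be isolated (an isolated vertex cannot be adjacent to a full vertex), so $q=0$; moreover each non-full vertex is adjacent to all $f$ full vertices and to no other non-full vertex, hence has degree exactly $f$, which yields $G\cong K_f+(n-f)K_1$ with $p=n-f$. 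If $f=0$, then $\alpha(G)=n$ forces $G\cong nK_1$, which is $(K_0+pK_1)\cup qK_1$ for admissible $p,q$.

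For the converse I would start from $G\cong (K_f+pK_1)\cup qK_1$ and compute both $\alpha(G)$ and the number of full vertices. The $p$ vertices of the $pK_1$ part together with the $q$ isolated vertices form an independent set of size $p+q$; any competing independent set that uses one of the $f$ clique vertices has size at most $1+q$, which does not exceed $p+q$ in the relevant range, so $\alpha(G)=p+q=n-f$. Identifying the full vertices as precisely the $f$ clique vertices (so that the full-vertex count is consistent with the parameter $f$) then lets me apply Theorem~\ref{cgo} to conclude $c(G)=1$.

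The hard part will be the bookkeeping of degenerate parameter values, so that the full-vertex count of $(K_f+pK_1)\cup qK_1$ really equals $f$: a full vertex cannot coexist with an isolated vertex, so $q\ge 1$ forces $f=0$; the value $n-f=1$ is impossible when $f\ge 1$, since the unique non-full vertex would then be adjacent to all $f=n-1$ full vertices and hence full itself; and $p=1$ must be excluded when $q=0$, since $K_f+K_1=K_{f+1}$ would create an extra full vertex (and in that case $c(G)=0$ rather than $1$). Handling these boundary cases, together with verifying that the exhibited independent set of size $p+q$ is genuinely maximum, is the only delicate point; everything else is a direct translation through Theorem~\ref{cgo}.
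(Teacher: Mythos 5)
Your overall route is exactly the paper's: the paper states this corollary with no proof beyond the phrase ``Using Theorem~\ref{cgo}, we deduce,'' i.e., it is meant to follow by translating the condition $\alpha(G)=n-f$ into the stated structural form, which is precisely what you carry out. Your forward direction is complete and correct, and your bookkeeping of degenerate parameters ($q\ge 1$ forces $f=0$; $n-f=1$ is impossible when $f\ge 1$; $p=1$, $q=0$ is inconsistent with the full-vertex count) is more careful than anything the paper records.

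However, one degenerate case slips through both your consistency checks and the corollary itself: $p=q=0$, i.e.\ $G\cong K_f=K_n$. Here the full-vertex count of $(K_f+0\cdot K_1)\cup 0\cdot K_1$ really is $f$, so none of your exclusions apply, yet $\alpha(K_n)=1\neq 0=n-f$ and $c(K_n)=0\neq 1$ (the paper itself notes $c(K_n)=0$ right after Theorem~\ref{domt}). Concretely, this is where your step ``the exhibited independent set of size $p+q$ is genuinely maximum'' fails: when $p=0$, an independent set through a clique vertex has size $1+q>p+q$, so $\alpha(G)=p+q$ is false there. Thus the corollary as printed is false for complete graphs, and your converse inherits that failure; the repair is to require $p\geq 1$ (equivalently $p\geq 2$ when $q=0$, since $p=1$, $q=0$ is already excluded by your full-vertex consistency check), after which your argument goes through verbatim. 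Everything else in the proposal is sound.
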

  \begin{theorem}
   Let $G$ be a graph with exactly one full vertex and $\delta(G)=1$. If $C(G)=s(s\ge 2)$, then $c(G)=s-2$. Further, $CG(G,\pi)\cong K_1\cup K_{1,s-2}$.
  \end{theorem}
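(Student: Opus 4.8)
The plan is to read off the global structure forced by the unique full vertex $u$ together with the pendant vertices, show that \emph{every} $c$-partition of order $k$ produces a coalition graph isomorphic to $K_1 \cup K_{1,k-2}$, and then conclude by taking a $C(G)$-partition and invoking the observation that the coalition count equals the maximum number of edges over all coalition graphs $CG(G,\pi)$.

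First I would pin down the role of $u$. Any set in a $c$-partition that contains $u$ is dominating (as $u$ is full), so by the $c$-partition rule it must be a singleton; hence in every $c$-partition $u$ sits alone in a set $V_1 = \{u\}$, a singleton dominating set. Since a coalition requires both partners to be non-dominating, $V_1$ is never a coalition partner, so it is an isolated vertex in every coalition graph — this supplies the $K_1$ component. Consequently the remaining sets $V_2,\dots,V_k$ avoid $u$; because $u$ is the \emph{only} full vertex, none of them is a singleton dominating set, and a non-singleton dominating set is forbidden in a $c$-partition, so each $V_i$ with $i \ge 2$ is non-dominating.

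Next I would exploit the pendants. Since $u$ is full, every pendant $w$ has $N(w) = \{u\}$, so $N[w] = \{w,u\}$; as none of $V_2,\dots,V_k$ contains $u$, such a set dominates $w$ if and only if it contains $w$. Hence any coalition $V_i \cup V_j$ with $i,j \ge 2$, being dominating and $u$-free, must contain every pendant. Let $\mathcal{A}$ be the collection of sets among $V_2,\dots,V_k$ that hold a pendant. If $|\mathcal{A}| \ge 3$, no pair $\{V_i,V_j\}$ can absorb all pendants, so no coalition exists and the partition is invalid; if $|\mathcal{A}| = 2$, the only candidate coalition is the pair of pendant-holding sets, which forces $k = 3$; and if $|\mathcal{A}| = 1$, say $\mathcal{A}=\{V_a\}$, every coalition must include $V_a$. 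In this last, generic case, validity forces every other $V_i$ to partner with $V_a$ (its only possible partner), while two leaves can never form a coalition since their union omits the pendants; thus the coalition graph is exactly the complete star $K_{1,k-2}$ centered at $V_a$, together with the isolated $V_1$. A short check shows $|\mathcal{A}|=2$ likewise gives $K_1 \cup K_{1,1} = K_1 \cup K_{1,k-2}$, so in every valid $c$-partition of order $k$ the coalition graph is $K_1 \cup K_{1,k-2}$, carrying exactly $k-2$ coalitions.

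Finally I would assemble the count. Since each $c$-partition of order $k$ has exactly $k-2$ coalitions and $k \le C(G) = s$, the maximum number of edges over all $CG(G,\pi)$ equals $s-2$, attained at a $C(G)$-partition $\pi$, for which $CG(G,\pi) \cong K_1 \cup K_{1,s-2}$; hence $c(G) = s-2$. I expect the main obstacle to be the case analysis on how the pendants are distributed: arguing cleanly that a valid $c$-partition cannot spread pendants over three or more sets, and that when they lie in a single set the forced star is \emph{complete} rather than merely a substar — this uses the defining requirement that every set form a coalition with some partner. The delicate methodological point is to phrase each step as a deduction about an \emph{arbitrary} given $c$-partition, so that the value $c(G)=s-2$ is forced rather than merely exhibited by one construction.
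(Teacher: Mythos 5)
Your proposal is correct, and its core structural insight is the same as the paper's: the unique full vertex $u$ must sit alone as a singleton dominating set (hence an isolated vertex of every coalition graph), and since every pendant $w$ has closed neighborhood $\{w,u\}$, any coalition among the $u$-free sets must contain all pendants, which forces the star structure. Where you genuinely diverge is in the logical architecture. The paper fixes one $c$-partition of maximum order $s$, asserts that the pendants must lie in a single set of it, counts $s-2$ coalitions there, and stops. You instead classify \emph{every} $c$-partition of order $k$, showing each yields $CG(G,\pi)\cong K_1\cup K_{1,k-2}$ with exactly $k-2$ coalitions, and then maximize over $k\le C(G)=s$. This extra step is not pedantry: $c(G)$ is defined as the maximum number of coalitions over \emph{all} $c$-partitions, not just those of maximum order, so an analysis confined to one order-$s$ partition establishes only that \emph{that} partition has $s-2$ coalitions; your quantification over arbitrary $k$ is what actually pins down the equality $c(G)=s-2$. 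Your case analysis on the number of pendant-holding sets ($|\mathcal{A}|=1$, $|\mathcal{A}|=2$, $|\mathcal{A}|\ge 3$) also covers the possibility, glossed over by the paper's claim that the pendants ``must form a single set,'' that the pendants split across exactly two sets (which forces $k=3$ and still gives $K_1\cup K_{1,1}$). The price is a slightly longer argument; the payoff is a proof that matches the definition of $c(G)$ and handles the small cases uniformly rather than dismissing $s=2,3$ as ``clearly'' true.
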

  \begin{proof}
  Consider a graph $G$ with $\delta(G)=1$ and a full vertex $u_1$. For $s=2,3$, clearly result holds. For $s\ge 4$, let $u_2,u_3,\dots,u_p$ are pendant vertices attached to $u_1$. Then $\pi=\left\{\left\{u_1\right\},W\right\}$, where $W=\left\{V_1,V_2,\dots, V_{s-1}\right\}$ is the partition of the $V(G)\backslash\left\{u_1\right\}$ such that every $V_i$ is not a dominating set but forms coalition with some other set $V_j$ in $\pi$. Here it is important to note that the pendant vertices must form a single set otherwise, it is impossible to form coalition with other sets as they are independent to each other. Without loss of generality let $\left\{u_2,u_3,\dots,u_p\right\}\subseteq V_1$. Therefore, all other sets in $W$ must form coalition with the unique set  $V_1$. If not, then there exist no vertex other than $u_1$ that will dominate the vertices $u_2,u_3,\dots,u_p$. Since $\left\{u_1\right\}$ is a dominating set, it do not form coalition with any set in $W$. Thus, $c(G)=s-2$.\\
			Since $G$ has only one full vertex and $\delta(G)=1$, the coalition graph  $CG(G,\pi)$ will have exactly two components. Further, as $CG(G,\pi)$ has $s$ vertices  out of which the vertex corresponding to the set $\left\{u_1\right\}$ do not form coalition with any other vertices, it will be an isolated vertex in $CG(G,\pi)$. Let $H$ be the another component of $CG(G,\pi)$. Then the  remaining $s-1$ vertices out of which one vertex corresponding to the set $\left\{u_2,u_3,\dots,u_p\right\}$ form coalition with all $s-2$ vertices and as $c(G)=s-2$, $H\cong K_{1,s-2}$. Thus, $CG(G,\pi)\cong K_1\cup K_{1,s-2}$. \end{proof}
    A coalition partition $\pi$ of $G$ is a singleton coalition partition if every set in $\pi$ consists of a single vertex. If a graph $G$ has a singleton
coalition partition, then $G$ is referred to as a singleton-partition graph (SP-graph).\\
   Next, we relate coalition number $C(G)$ of graph $G$ with coalition count of $G$. We omit the proof as it is straight forward.
\begin{remark}\label{obf}
 For any graph $G$ with $f$ full vertices having coalition number $C(G)$ and coalition count $c(G)$, $c(G)\ge \lceil{\dfrac{C(G)-f}{2}}\rceil$.
\end{remark}
   \begin{theorem}\label{colalpha}
   For any SP-graph $G$ of order $n$ with no full vertices, $c(G)\ge \alpha(G)$.
   \end{theorem}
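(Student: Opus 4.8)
The plan is to work with the singleton coalition partition itself. Since $G$ is an SP-graph, $\pi_0=\{\{v\}:v\in V(G)\}$ is a $c$-partition of $G$. Because $G$ has no full vertices, no singleton $\{v\}$ is a dominating set, so in $\pi_0$ two parts $\{u\}$ and $\{w\}$ are coalition partners exactly when $\{u,w\}$ is a dominating set of $G$. As $c(G)$ is the maximum, over all $c$-partitions, of the number of coalitions, it is at least the number of coalitions in $\pi_0$, equivalently at least the number of edges of $CG(G,\pi_0)$. Hence it suffices to produce $\alpha(G)$ distinct dominating pairs of vertices.

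Fix a maximum independent set $I=\{u_1,\dots,u_\alpha\}$ with $\alpha=\alpha(G)$. Since $\pi_0$ is a $c$-partition, every $u_i$ has a coalition partner $p_i$ with $\{u_i,p_i\}$ dominating. The main case is $\alpha\ge 3$. Here I first observe that no two vertices of $I$ can form a dominating pair: given $u_i,u_j\in I$, any third vertex $u_k\in I$ (which exists as $\alpha\ge 3$) lies outside $\{u_i,u_j\}$ and, by independence, is adjacent to neither $u_i$ nor $u_j$, so $u_k$ is not dominated. Consequently each partner $p_i$ lies outside $I$. This is precisely what makes the $\alpha$ coalitions $\{u_1,p_1\},\dots,\{u_\alpha,p_\alpha\}$ pairwise distinct: each such pair meets $I$ in the single vertex $u_i$, so a coincidence $\{u_i,p_i\}=\{u_j,p_j\}$ with $i\ne j$ would force $u_i=u_j$. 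Thus $\pi_0$ contains at least $\alpha$ distinct coalitions and $c(G)\ge\alpha(G)$.

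It remains to dispose of $\alpha\le 2$, and this is where the main obstacle lies. The value $\alpha=1$ cannot occur, since $\alpha(G)=1$ forces $G$ to be complete and hence every vertex to be full, contrary to hypothesis. For $\alpha=2$ the clean counting breaks down: the pair $\{u_1,u_2\}$ may itself be dominating, in which case the partnerships of $u_1$ and $u_2$ collapse into the single coalition $\{u_1,u_2\}$. I would resolve this by invoking Theorem~\ref{cgo}: with $f=0$ it gives $c(G)=1$ if and only if $\alpha(G)=n$, so for $n\ge 3$ we have $c(G)\ne 1$, while an SP-graph with no full vertex always admits at least one coalition, whence $c(G)\ge 2=\alpha(G)$. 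The remaining graph $\overline{K_2}$, where $c=1<2=\alpha$, is the genuine exception and marks the true boundary of the statement.
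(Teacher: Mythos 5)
Your proof is correct, and its core coincides with the paper's: both start from the singleton $c$-partition $\pi_0$, and both obtain $\alpha(G)$ distinct coalitions by showing that each vertex $u_i$ of a maximum independent set $I$ has a coalition partner $p_i$ outside $I$, so that the pairs $\{u_i,p_i\}$ are pairwise distinct. The differences lie in how the remaining cases are covered, and they matter. The paper splits on whether $\alpha(G)\le\lceil n/2\rceil$ (settled by Remark~\ref{obf}) or $\alpha(G)\ge\lceil n/2\rceil+1$, and in the latter case merely asserts the ``partner outside $I$'' fact as easy to observe; you split on $\alpha$ itself, supply the actual justification (the third-vertex argument, which genuinely requires $\alpha\ge 3$), eliminate $\alpha=1$ outright, and settle $\alpha=2$ by citing Theorem~\ref{cgo} with $f=0$ together with $c(G)\ge 1$. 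What your sharper bookkeeping buys is significant: it reveals that the statement as written fails for $\overline{K_2}$, which is an SP-graph of order $2$ with no full vertices and $c(\overline{K_2})=1<2=\alpha(\overline{K_2})$. The paper's argument breaks down silently at exactly this graph: $\overline{K_2}$ lands in its second case (since $2=\alpha\ge\lceil n/2\rceil+1$ when $n=2$), where no vertex outside $I$ exists to serve as a partner, so the claimed $\alpha$ coalitions cannot be produced. Your route is therefore not merely an alternative; it locates the missing hypothesis (order $n\ge 3$ suffices, since your three cases then cover everything) under which the theorem is actually true.
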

   \begin{proof}
   Let $G$ is a singleton-partition graph. Then $\pi=\left\{\left\{v_1\right\},\left\{v_2\right\},\dots,\left\{v_n\right\}\right\}$ is a $c$-partition. Since $G$ has no full vertex, each set $V_i$ must form a coalition with some set $V_j$, $1\le i<j\le n$. By remark~\ref{obf}, $c(G)\ge \lceil{\dfrac{n}{2}}\rceil$.  
   If $\alpha(G)\le \lceil{\dfrac{n}{2}}\rceil$, then clearly result holds. Suppose $\alpha(G)\ge \lceil{\dfrac{n}{2}}\rceil+1$. Then there are at least $\lceil{\dfrac{n}{2}}\rceil+1$ vertices in $G$ are independent. Let $I=\left\{v_1,v_2,\dots,v_{\alpha}\right\}$ is the maximum independent set. Further, it is easy to observe that any set in $\pi$ corresponding to the vertices of $I$ will form coalition with the set corresponding to the vertex not in $I$. Thus, there will be at least $\alpha$ coalitions. Therefore, $\alpha(G)\le c(G)$.\end{proof}

\end{document}